\newcommand*{\R}{\ensuremath{\mathbb{R}}}
\renewcommand*{\epsilon}{\varepsilon}
\newtheorem{theorem}{Theorem}
\newtheorem{definition}[theorem]{Definition}
\newtheorem{corollary}[theorem]{Corollary}
\newtheorem{lemma}[theorem]{Lemma}
\newtheorem{fact}[theorem]{Fact}
\newtheorem{example}[theorem]{Example}
\begin{document}
\title{Discontinuity points of a function with a closed and connected graph}
\author{Michał Stanisław Wójcik}
\maketitle

\begin{abstract}
The main result of this paper states that for a function $f:\R^2\to Y$ with a closed, connected and locally connected graph, where $Y$ is a locally compact, second-countable metrisable space, the graph over discontinuity points remains locally connected.
\end{abstract}

\subsection*{Motivation}
It is a classic result that for a function $f:X\to Y$ with a closed graph and $Y$ Hausdorff, a sufficient and necessary condition for being continuous is sub-continuity \cite[3.4]{Ful}. However, it is an interesting question how, for various spaces $X$ and $Y$, additional topological properties of a closed graph are related to the continuity of a function. It is known that, for a function $f:\R\to \R$ with a closed graph, a sufficient and necessary condition for being continuous is the connectedness of the graph \cite{Bur}. In 2001, Michał R. Wójcik and I stated the question whether this result can be extended to $f:\R^2\to\R$ \cite[9]{wojcik1} – this problem was then propagated by Cz. Ryll-Nardzewski. The answer to this question is negative. The first known discontinuous function $f:\R^2\to\R$ with a connected and closed graph was shown by J. Jel\'{\i}nek in \cite{JL}. It can be shown that the graph of Jel\'{\i}nek's function in not locally-connected \cite[A4]{mrwphd}. Therefore a new question was stated, whether connectedness together with the local connectedness of the graph is a sufficient and necessary condition of being continuous for a function $f:\R^2\to\R$ with a closed graph. This question, as far as I know, remains open. In this paper, I show some properties of the set of discontinuity of a function $f:\R^2\to\R$ with a closed, connected and locally connected graph, hoping that they might be useful in the main research.

\subsection*{Result}
The main result of this paper states that for a function $f:\R^2\to Y$ with a closed, connected and locally connected graph, where $Y$ is a locally compact, second-countable metrisable space, the graph over discontinuity points remains locally connected. This result is given as Corollary \ref{main} as a consequence of some deep topological properties of the real plane and the more generic Theorem \ref{cluster}.

\section{Notation and terminology}

\begin{definition}
Let $X,Y$ be topological spaces and $f:X\to Y$ be an arbitrary function.
\begin{enumerate} 
\item We will denote by $C(f)$ or $C_f$ the set of all points of continuity,
\item by $D(f)$ or $D_f$ -- set of all points of discontinuity. 
\item We will denote by $\pi$ a projection operator $\pi:X\times Y \to X$ and $\pi(x,y)=x$.
\item For $A\subset X$, by $f|A$ we will denote a restriction of $f$ to the subdomain $A$.
\end{enumerate} 
\end{definition}

In the context of function $f:X\to Y$, we will not use a separate symbol to denote the graph of $f$, for $f$ itself, in terms of Set Theory, is a graph. So when we use Set Theory operations and relations with respect to $f$, they should be understood as operations and relations with respect to the graph. Whenever this naming convention might be confusing, we will add the word “graph”, e.g. “$f$ has a closed graph”.

\begin{definition}
Let $Y$ be a topological space and $y_n\in Y$ be an arbitrary net. We will write $y_n \to \emptyset$ or $\lim y_n = \emptyset$ iff $y_n$ has no convergent subnet.
\end{definition} 

\section{Functions with a closed graph}

It will be helpful to cite two well-known theorems concerning functions with a closed graph:

\begin{theorem}\label{fcon}
If $X$ is a topological space, $Y$ is a compact space, $f:X\to Y$ and the graph of $f$ is closed, then $f$ is continuous.
\end{theorem}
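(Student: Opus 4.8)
The plan is to prove that a function $f:X\to Y$ with a closed graph into a compact space $Y$ is continuous. Since continuity is a local property, it suffices to verify it at an arbitrary point $x_0\in X$. I would argue by contradiction: suppose $f$ fails to be continuous at $x_0$, so there is an open neighbourhood $V$ of $f(x_0)$ in $Y$ such that $f^{-1}(V)$ fails to be a neighbourhood of $x_0$. This produces a net $x_\alpha\to x_0$ with $f(x_\alpha)\notin V$ for every $\alpha$.

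The key step is then to exploit compactness of $Y$. Since $(f(x_\alpha))$ is a net in the compact space $Y$, it has a convergent subnet $f(x_{\alpha_\beta})\to y$ for some $y\in Y$. Passing to the corresponding subnet, we have $x_{\alpha_\beta}\to x_0$ and $f(x_{\alpha_\beta})\to y$, so the net of graph points $(x_{\alpha_\beta},f(x_{\alpha_\beta}))\to (x_0,y)$ in $X\times Y$. Because the graph of $f$ is closed, the limit point $(x_0,y)$ must lie in the graph, which forces $y=f(x_0)$.

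This yields the contradiction: on one hand $f(x_{\alpha_\beta})\to f(x_0)$, so eventually $f(x_{\alpha_\beta})\in V$ since $V$ is an open neighbourhood of $f(x_0)$; on the other hand, every term of the original net, hence of the subnet, satisfies $f(x_{\alpha_\beta})\notin V$. This contradiction shows $f$ must be continuous at $x_0$, and since $x_0$ was arbitrary, $f$ is continuous.

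I expect the main subtlety to be purely technical rather than conceptual: one must work with nets (not merely sequences) because $X$ is an arbitrary topological space with no countability assumption, and care is needed in passing to subnets while preserving both convergences simultaneously. The essential content is simply that closedness of the graph upgrades ``$f(x_\alpha)$ clusters somewhere'' to ``$f(x_\alpha)$ clusters at $f(x_0)$'', and compactness of $Y$ guarantees such clustering exists.
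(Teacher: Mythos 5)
Your proof is correct as it stands, and it needs no separation axioms beyond what the statement gives: compactness of $Y$ yields a convergent subnet of $(f(x_\alpha))$ without Hausdorffness, closedness of the graph is exactly the statement that it contains all limits of its convergent nets, and the fact that the graph is a \emph{function's} graph forces the limit $y$ to equal $f(x_0)$. Note, however, that the paper does not prove this theorem at all; it only cites an external reference (\cite[T2]{wojcik1}), so there is no in-paper argument to compare yours against. For context, the proofs usually given in the literature for this classical fact take one of two routes that avoid nets entirely: (i) a tube-lemma style argument --- for an open $V\ni f(x_0)$, each $y$ in the compact set $Y\setminus V$ has a basic open box $U_y\times W_y$ missing the graph, a finite subcover of $Y\setminus V$ then produces a neighbourhood $U=\bigcap U_{y_i}$ of $x_0$ with $f(U)\subset V$; or (ii) the observation that the projection $X\times Y\to X$ is a closed map when $Y$ is compact, so $f^{-1}(C)=\pi\bigl(f\cap (X\times C)\bigr)$ is closed for every closed $C\subset Y$. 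Your net argument is equivalent in content (compactness $=$ every net has a convergent subnet) and is arguably the most conceptual of the three; the cover argument buys a proof accessible without the machinery of nets and subnets, while the closed-projection argument generalises most cleanly (it is the standard route to the fact that perfect maps pull back closed sets).
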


(for proof: e.g. \cite[T2]{wojcik1})

\begin{theorem}
If $X$ is a Bair and Hausdorff space, $Y$ is a $\sigma$-locally compact space, $f:X\to Y$ and the graph of $f$ is closed, then C(f) is an open and dense subset of $X$.
\end{theorem}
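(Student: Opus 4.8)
The plan is to deduce both assertions from Theorem~\ref{fcon} by localising the problem, over suitable pieces of the domain and of the range, to the situation of a closed-graph function with compact codomain. I will use the hypothesis on $Y$ in the following form: there is a countable family of open sets $U_n\subseteq Y$ with each closure $\overline{U_n}$ compact and $Y=\bigcup_n U_n$; this records both the $\sigma$-compact exhaustion needed for a Baire-category argument and the local compactness needed for openness. The one auxiliary fact I will lean on is that projection along a compact factor is a closed map. Concretely, for each $n$ the set $f\cap(X\times\overline{U_n})$ is closed in $X\times\overline{U_n}$, because $f$ has a closed graph and $\overline{U_n}$ is closed; since $\overline{U_n}$ is compact, the projection $\pi\colon X\times\overline{U_n}\to X$ is closed, so its image $F_n:=\pi\bigl(f\cap(X\times\overline{U_n})\bigr)=\{x\in X: f(x)\in\overline{U_n}\}$ is closed in $X$. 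As the $U_n$ cover $Y$, the closed sets $F_n$ cover $X$.

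For density, I would fix an arbitrary nonempty open $U\subseteq X$ and aim to produce a point of $C(f)$ inside it. Being an open subspace of a Baire space, $U$ is itself Baire; the sets $U\cap F_n$ are closed in $U$ and cover $U$, so by the Baire property some $U\cap F_m$ has nonempty interior in $U$, and since $U$ is open in $X$ this gives a nonempty open $V\subseteq U$ with $V\subseteq F_m$. On $V$ the function takes values in the compact set $\overline{U_m}$, and the graph of $f|V$ equals $f\cap(V\times\overline{U_m})$, which is closed in $V\times\overline{U_m}$. Theorem~\ref{fcon} then makes $f|V\colon V\to\overline{U_m}$ continuous, and because $V$ is open in $X$ continuity of the restriction at a point of $V$ is continuity of $f$ there; hence $V\subseteq C(f)$ and $C(f)\cap U\neq\emptyset$. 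As $U$ was arbitrary, $C(f)$ is dense.

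For openness, I would take $x_0\in C(f)$, set $y_0=f(x_0)$ and choose $m$ with $y_0\in U_m$. Continuity of $f$ at $x_0$ together with openness of $U_m$ yields an open $W\ni x_0$ with $f(W)\subseteq U_m\subseteq\overline{U_m}$. Exactly as before, $f|W\colon W\to\overline{U_m}$ has closed graph and compact codomain, so it is continuous by Theorem~\ref{fcon}, and openness of $W$ upgrades this to continuity of $f$ on all of $W$; thus $W\subseteq C(f)$ and $x_0$ is interior to $C(f)$.

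I expect the main obstacle to sit entirely in the density step: one must verify that the level sets $F_n$ are genuinely closed — this is where the closed-projection (tube) lemma and the closedness of the graph interact — and then run the Baire-category argument on the open subspace $U$ rather than on $X$ itself, so as to keep the conclusion localised inside $U$. The openness step is comparatively routine once the $U_n$ with compact closures are in hand; the only real care needed there, and in the density step, is the repeated observation that continuity of a restriction to an \emph{open} set propagates to continuity of $f$. If ``$\sigma$-locally compact'' were read only as ``$\sigma$-compact'', the density argument would survive but the openness argument could fail, so pinning down that the hypothesis also supplies local compactness is the conceptual crux.
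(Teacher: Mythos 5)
Your argument is correct. The paper does not prove this statement itself --- it only cites \cite[T2]{Dob} --- and your proof is essentially the standard one: exhaust $Y$ by open sets $U_n$ with compact closures, use closedness of the projection $X\times\overline{U_n}\to X$ to conclude that the sets $F_n=f^{-1}(\overline{U_n})$ are closed, run the Baire category argument inside an arbitrary nonempty open $U\subseteq X$ to find an open $V$ mapped into some $\overline{U_m}$, and reduce to Theorem~\ref{fcon}; the openness step is the same localisation. The only point worth flagging is definitional: the paper never says what ``$\sigma$-locally compact'' means, and your reading (a countable open cover of $Y$ by sets with compact closures) is precisely the hypothesis under which both halves of your argument work, as you yourself note; the Hausdorffness of $X$ is in fact not needed in your proof.
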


(for proof: e.g. \cite[T2]{Dob})

\section{Graph over discontinuity points}

I will begin with several well-known facts:

\begin{fact}\label{cantorimg}
Every non-empty metrisable compact space is a continuous image of the Cantor set. 
\end{fact}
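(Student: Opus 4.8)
The plan is to realise $K$ as the set of branch-limits of a finitely-branching tree of shrinking closed sets, and to read off from this a continuous surjection from the Cantor set $C$, which I identify with $\{0,1\}^{\N}$ equipped with the product topology.

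First I would fix a compatible metric $d$ on the non-empty compact metrisable space $K$. Using compactness, for each $n\in\N$ the space $K$ can be covered by finitely many non-empty closed sets of diameter at most $2^{-n}$. The aim is to organise these covers into a scheme indexed by finite binary strings: a family $\{F_s\}_{s\in\{0,1\}^{<\N}}$ of non-empty closed subsets of $K$ with $F_\emptyset=K$, with each $F_s$ of diameter at most $2^{-|s|}$, and with $F_{s0}\cup F_{s1}=F_s$ (so in particular $F_{s0},F_{s1}\subseteq F_s$). I would build this by induction on the length of $s$: having defined $F_s$, I cover the compact set $F_s$ by finitely many non-empty closed pieces of diameter at most $2^{-(|s|+1)}$ and then collapse that finite list into exactly two sets $F_{s0},F_{s1}$ by grouping the pieces (repeating one if the list has fewer than two members) so that the two still union to $F_s$ and each remains non-empty.

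Next I would define the map. For a branch $\alpha=(a_1,a_2,\dots)\in\{0,1\}^{\N}$ the sets $F_{\alpha|_n}$ form a decreasing sequence of non-empty compact sets whose diameters tend to $0$, so by the Cantor intersection property $\bigcap_n F_{\alpha|_n}$ is a single point, which I call $g(\alpha)$. Surjectivity of $g$ follows because at every level the sets indexed by strings of that length cover $K$, so any prescribed point lies in some $F_s$ at each level and these nested choices can be threaded into a branch. Continuity is immediate from the diameter bound: if $\alpha$ and $\beta$ agree on their first $n$ coordinates then $g(\alpha),g(\beta)\in F_{\alpha|_n}$, whence $d(g(\alpha),g(\beta))\le 2^{-n}$, so $g$ is even uniformly continuous with respect to the standard metric on $\{0,1\}^{\N}$.

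The main obstacle I anticipate is the bookkeeping in the inductive construction, specifically forcing the tree to branch into exactly two children at every node while simultaneously preserving the covering identity $F_{s0}\cup F_{s1}=F_s$ and non-emptiness; the covers furnished by compactness have an uncontrolled number of members, so the grouping and padding step must be carried out carefully. As an alternative that sidesteps this, I could embed $K$ as a closed subset of the Hilbert cube $[0,1]^{\N}$, use the binary-expansion surjection $\{0,1\}^{\N}\to[0,1]$ together with the homeomorphism $(\{0,1\}^{\N})^{\N}\cong\{0,1\}^{\N}$ to obtain a continuous surjection $C\to[0,1]^{\N}$, and then restrict to the closed preimage of $K$; but that route then requires the lemma that every non-empty closed subset of $C$ is again a continuous image of $C$, which relocates rather than removes the combinatorial difficulty.
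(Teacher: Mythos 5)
The paper offers no proof of this fact at all---it cites Engelking [4.5.9] (this is the classical Hausdorff--Alexandroff theorem)---so the only question is whether your argument stands on its own. As written, it does not: the inductive step cannot satisfy the invariants you impose. You require simultaneously that every node has exactly two children, that $F_{s0}\cup F_{s1}=F_s$, and that $\mathrm{diam}(F_s)\le 2^{-|s|}$. Your construction covers $F_s$ by finitely many closed pieces of diameter at most $2^{-(|s|+1)}$ and then ``groups'' them into two sets; but the union of several pieces of diameter $2^{-(|s|+1)}$ can have diameter as large as $\mathrm{diam}(F_s)$ itself, so the diameter bound at level $|s|+1$ is destroyed. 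Moreover, no cleverer grouping can repair this: in the compact metric space consisting of three points at mutual distance $1$, any cover by two sets has a member containing two of the points, hence of diameter $1$; so a space of diameter $1$ need not admit any cover by two closed sets of diameter $\tfrac12$. The difficulty you set aside as ``bookkeeping'' is thus a genuine obstruction to the scheme as stated, not a notational nuisance.

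The standard repair keeps your strategy but weakens the invariant: demand only that diameters tend to $0$ along every infinite branch, not that they halve at every level. Concretely, at node $s$ cover $F_s$ by non-empty closed pieces $P_1,\dots,P_k$ of diameter at most half the current target, choose $m$ with $2^m\ge k$, assign the pieces (with repetitions) to the $2^m$ binary strings extending $s$ by $m$ symbols, and let each intermediate node between $s$ and those descendants be the union of the pieces assigned to the leaves below it. Intermediate nodes inherit the parent's diameter, but after $m$ further levels the diameter has dropped, so along each branch $\mathrm{diam}(F_{\alpha|_n})\to 0$. Your definition of $g$ by Cantor intersection, the threading argument for surjectivity, and continuity (at a branch $\alpha$, given $\epsilon>0$ choose $n$ with $\mathrm{diam}(F_{\alpha|_n})<\epsilon$; any $\beta$ agreeing with $\alpha$ on $n$ coordinates gives $g(\beta)\in F_{\alpha|_n}$) then go through essentially verbatim, though the continuity is now pointwise rather than uniform with a clean modulus. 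Alternatively, your Hilbert-cube route is sound once you supply the lemma that every non-empty closed subset $A$ of the Cantor set $C$ is a continuous image of $C$; that lemma is genuinely easier than the tree construction, since one can retract $C$ onto $A$ using the gap structure of $C\subset[0,1]$: on each complementary interval of $A$ meeting $C$, pick a cut point outside $C$ and send the part of $C$ below it to the left endpoint and the part above it to the right endpoint.
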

(for proof e.g. \cite[4.5.9]{Eng})

\begin{fact}\label{ext}
If $X$ is a connected and locally arcwise-connected metrisable space, $F$ is a closed subset of $X$, $C\subset[0,1]$ is the Cantor set and $f:C \to F$ is continuous and $f(C)=X$, then $f$ has a continuous extension $f^*:[0,1]\to X$.
\end{fact}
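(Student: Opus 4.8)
The plan is to build the extension by the only reasonable means: leave $f^*=f$ on $C$ and fill each complementary interval of $C$ with an arc whose endpoints match the values already prescribed. First I would record the reduction that makes everything compact. Since $C$ is compact and $f$ is continuous, $f(C)=X$ is compact; as $f(C)\subset F\subset X$ this also forces $F=X$, so $X$ is a compact, connected, locally arcwise-connected metrisable space, i.e. a locally connected continuum. In particular $X$ is arcwise connected (connected plus locally arcwise-connected gives path-connected, and path-connected plus Hausdorff gives arcwise-connected). Next I would use that $[0,1]\setminus C=\bigsqcup_n (a_n,b_n)$ is a countable disjoint union of open intervals whose endpoints $a_n,b_n$ lie in $C$ (and $0,1\in C$, so no boundary issues). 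For each $n$ I choose an arc $\gamma_n\colon[a_n,b_n]\to X$ with $\gamma_n(a_n)=f(a_n)$ and $\gamma_n(b_n)=f(b_n)$ (a constant map when the two images coincide), and set $f^*=\gamma_n$ on $[a_n,b_n]$ and $f^*=f$ on $C$. This is well defined because the gap endpoints belong to $C$ and the arcs are pinned there.

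The heart of the matter is \emph{controlling the diameters of the $\gamma_n$}. Here I would combine two quantitative facts. First, $f$ is uniformly continuous, since $C$ is compact. Second, I would upgrade the hypothesis of local arcwise-connectedness to a uniform version via a Lebesgue-number argument: given $\e>0$, cover $X$ by open sets $V_x$ such that any two points of $V_x$ can be joined by an arc inside the ball of radius $\e/2$ about $x$ (this is exactly local arcwise-connectedness applied at each $x$); compactness yields a Lebesgue number $\delta>0$, so that any two points at distance less than $\delta$ both lie in a single $V_x$ and are therefore joined by an arc of diameter less than $\e$. Feeding the $\delta$ of this uniform local arcwise-connectedness into the modulus of uniform continuity of $f$, I obtain a threshold $\eta$ such that every gap of length less than $\eta$ has $d\big(f(a_n),f(b_n)\big)<\delta$, hence admits an arc $\gamma_n$ of diameter less than $\e$. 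Running this for $\e=1/m$ with a decreasing sequence of thresholds $\eta_m\to 0$, I choose each $\gamma_n$ so that its diameter is less than $1/m$ whenever the gap length is less than $\eta_m$; the finitely many gaps of length at least $\eta_1$ are filled by arbitrary arcs using arcwise-connectedness. The upshot is that $\operatorname{diam}(\gamma_n)\to 0$ as the gap length tends to $0$.

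It remains to verify continuity of $f^*$. On the interior of any gap this is immediate, as $f^*$ agrees there with the continuous arc $\gamma_n$. The only delicate points are $x\in C$, including the endpoints of gaps and the accumulation points of the Cantor set. Given a sequence $s_k\to x$, I would split it into three kinds of terms. Terms with $s_k\in C$ satisfy $f^*(s_k)=f(s_k)\to f(x)$ by continuity of $f$. Terms lying in one of the (at most two) gaps having $x$ as an endpoint converge to $x$ along a fixed arc, so $f^*(s_k)\to f(x)$ by continuity of that arc. Terms lying in gaps $(a^{(k)},b^{(k)})$ not touching $x$ force the whole interval $[a^{(k)},b^{(k)}]$ to one side of $x$ and to shrink onto $x$, so the lengths tend to $0$; then $d\big(f^*(s_k),f(x)\big)\le \operatorname{diam}(\gamma^{(k)})+d\big(f(a^{(k)}),f(x)\big)$, where the first term vanishes by the diameter control and the second by $a^{(k)}\to x$ and continuity of $f$. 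Hence $f^*(s_k)\to f(x)=f^*(x)$ in every case.

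The main obstacle is the second paragraph: converting the purely local, pointwise hypothesis of local arcwise-connectedness into a uniform modulus (the Lebesgue-number step) and then synchronising the arc diameters with the vanishing gap lengths so that continuity is preserved at the uncountably many limit points of $C$. Once that uniform control is in hand, well-definedness and the case analysis for continuity are routine.
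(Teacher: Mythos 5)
Your proof is correct, and it matches the paper's treatment in substance: the paper gives no argument of its own for this fact, citing Kuratowski \cite[50.I.5]{Ku2}, and the classical proof behind that citation is exactly your construction --- fill each complementary interval of $C$ with an arc, controlling the arc diameters by the uniform continuity of $f$ together with a Lebesgue-number upgrade of local arcwise connectedness to a uniform modulus on the compact space $X$. Your opening reduction (since $f(C)\subset F\subset X$, the hypothesis $f(C)=X$ forces $F=X$, which is compact) is a correct reading of the statement as written and is precisely what legitimises those compactness arguments.
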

(for proof: e.g. \cite[50.I.5]{Ku2})

\begin{fact}\label{clo_lc}
If $X$ is a topological space and $A,B\subset X$ are both closed and locally connected, then $A\cup B$ is locally connected.
\end{fact}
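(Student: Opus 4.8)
The plan is to use the standard characterisation of local connectedness: a space $Z$ is locally connected if and only if for every open $U \subseteq Z$ every connected component of $U$ is open in $Z$. So I would set $Z = A \cup B$ and aim to show that an arbitrary component $Q$ of an arbitrary open set $U \subseteq A \cup B$ is open in $A \cup B$.

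The first ingredient I would establish is a gluing observation that exploits the hypothesis that $A$ and $B$ are \emph{closed}: for any $V \subseteq A \cup B$, the set $V$ is open in $A \cup B$ if and only if $V \cap A$ is open in $A$ and $V \cap B$ is open in $B$. The forward direction is immediate from the definition of the subspace topology. For the converse I would pass to complements: since $(A \cup B) \setminus V = (A \setminus V) \cup (B \setminus V)$, the set $A \setminus V$ is closed in $A$ (because $V \cap A$ is open in $A$) and $A$ is closed in $A \cup B$, so $A \setminus V$ is closed in $A \cup B$; likewise $B \setminus V$ is closed in $A \cup B$. Hence the complement of $V$ is a union of two closed sets, so $V$ is open. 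This step is exactly where closedness of $A$ and $B$ enters, and I expect it to be the crux of the argument — without closedness the statement fails.

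With this in hand, I would fix an open $U \subseteq A \cup B$ and a component $Q$ of $U$, and verify the two conditions that $Q \cap A$ is open in $A$ and $Q \cap B$ is open in $B$. By symmetry, consider $Q \cap A$. The set $U \cap A$ is open in $A$, and since $A$ is locally connected every component of $U \cap A$ is open in $A$. For each $x \in Q \cap A$, the component $C_x$ of $x$ in $U \cap A$ is connected, contained in $U$, and meets $Q$, so $C_x \subseteq Q$; thus $Q \cap A = \bigcup_{x \in Q \cap A} C_x$ is a union of sets open in $A$, hence open in $A$. The same argument gives $Q \cap B$ open in $B$.

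Combining the two steps, the gluing observation applied to $V = Q$ shows that $Q$ is open in $A \cup B$. As $U$ and its component $Q$ were arbitrary, every component of every open subset of $A \cup B$ is open, so $A \cup B$ is locally connected. I would remark that the naive approach — taking, for a point $p \in A \cap B$, connected-component neighbourhoods $C_A \subseteq U \cap A$ and $C_B \subseteq U \cap B$ of $p$ and forming $C_A \cup C_B$ — does not obviously yield an open set, since $C_B \cap A$ need not be open in $A$; routing through the component-wise criterion is precisely what avoids this difficulty.
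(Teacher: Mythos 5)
Your proof is correct. A comparison with the paper is somewhat one-sided here, because the paper does not actually prove this fact: it is listed as a known result with only a pointer to Kuratowski \cite[49.I.3]{Ku2}, so your self-contained argument supplies exactly what the paper outsources to the literature. The route you take is the standard one (and almost certainly the substance of the cited proof): the characterisation of local connectedness via openness of components of open sets, the pasting lemma for two closed sets, and the key computation that a component $Q$ of an open $U\subseteq A\cup B$ satisfies $Q\cap A=\bigcup_{x} C_x$, a union of components of $U\cap A$, hence is open in $A$, and symmetrically for $B$. Two virtues of your write-up are worth noting. First, it works in exactly the stated generality --- an arbitrary topological space; in fact you only ever use that $A$ and $B$ are closed in $A\cup B$, not in $X$, so your argument proves a formally slightly stronger statement. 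Second, you correctly isolate closedness as the crux: the fact genuinely fails without it, e.g.\ the comb space $\bigl([0,1]\times\{0\}\bigr)\cup\bigl(\{0\}\times[0,1]\bigr)\cup\bigcup_{n}\bigl(\{1/n\}\times[0,1]\bigr)$ is the union of the closed arc $\{0\}\times[0,1]$ and the locally connected (but non-closed) ``broom'' consisting of the base with the teeth attached, yet it is not locally connected at the points $(0,y)$ with $y>0$.
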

(for proof: e.g. \cite[49.I.3]{Ku2})

\begin{fact}\label{lcimg}
If $X$ is a compact and locally connected space, $Y$ is a Hausdorff space, $f:X\to Y$ is continuous and $f(X)=Y$, 
then $Y$ is compact and locally connected. 
\end{fact}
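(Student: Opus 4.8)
The statement asserts two things about $Y$, and I would dispatch them separately. Compactness is immediate: $X$ is compact and $f$ is a continuous surjection, so $Y=f(X)$ is the continuous image of a compact space and hence compact. All the real content lies in local connectedness, and my plan is to recognize $f$ as a quotient map and then show that local connectedness passes through quotient maps. This route is forced by the hypotheses, since $Y$ is only assumed Hausdorff and not metrisable, so none of the metric characterisations of local connectedness (property $S$, uniform local connectedness) are available; everything must be extracted from the map-theoretic structure.

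First I would establish that $f$ is a closed map. Let $A\subset X$ be closed; since $X$ is compact, $A$ is compact, so $f(A)$ is compact, and a compact subset of a Hausdorff space is closed. Thus $f$ sends closed sets to closed sets. Being a continuous closed surjection, $f$ is therefore a quotient map, which means $Y$ carries the quotient topology induced by $f$: a set $W\subset Y$ is open precisely when $f^{-1}(W)$ is open in $X$.

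The core step is to prove that a quotient image of a locally connected space is locally connected, using the characterisation that a space is locally connected if and only if every component of every open subset is open. So let $U\subset Y$ be open and let $C$ be a component of $U$; I aim to show $C$ is open, which by the quotient property reduces to showing $f^{-1}(C)$ is open in $X$. Fix $x\in f^{-1}(C)$. Then $f^{-1}(U)$ is open in $X$, and since $X$ is locally connected the component $V$ of $f^{-1}(U)$ containing $x$ is open. Now $f(V)$ is connected as a continuous image of a connected set, it lies in $U$, and it meets $C$ (it contains $f(x)$); since $C$ is a maximal connected subset of $U$, this forces $f(V)\subset C$, whence $V\subset f^{-1}(C)$. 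Thus every point of $f^{-1}(C)$ has an open neighbourhood inside $f^{-1}(C)$, so $f^{-1}(C)$ is open, $C$ is open, and $Y$ is locally connected.

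The main obstacle, and the step that genuinely uses the compactness of $X$ together with the Hausdorffness of $Y$, is the passage from ``continuous surjection'' to ``quotient map''. Without closedness the conclusion is false, so the delicate point is not the component-chasing argument (which is routine once the quotient structure is in hand) but rather pinning down that the compact-to-Hausdorff hypothesis automatically upgrades $f$ to a closed, hence quotient, map. I would therefore present the closedness of $f$ as the key lemma and treat the rest as its formal consequence.
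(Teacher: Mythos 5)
Your proposal is correct and follows essentially the same route as the paper: the paper's (sketched) proof likewise notes that compactness of $X$ plus Hausdorffness of $Y$ makes $f$ a quotient map, and then invokes the invariance of local connectedness under quotient mappings, citing Whyburn for that step. The only difference is that you prove the quotient-invariance directly via the ``components of open sets are open'' characterisation instead of citing it, which is a fine and standard filling-in of the reference.
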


(for proof: notice that since $X$ is compact and $Y$ is Hausdorff, $f$ is a quotient mapping
and local connectedness is invariant under quotient mappings \cite[T2]{Why})

\begin{lemma}\label{continuum_factory}
If $X$ is a connected metrisable space, and $F$ is a connected and closed subset, $X\setminus F = A \cup B$, where $Clo(A)\cap B = Clo(B) \cap A = \emptyset$, then $F\cup A$ is connected and closed.
\end{lemma}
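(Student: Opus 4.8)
The plan is to treat closedness and connectedness separately, extracting everything from the two separation hypotheses $\overline{A}\cap B=\emptyset$ and $\overline{B}\cap A=\emptyset$. First I would observe that these already force $A\cap B=\emptyset$ (since $A\cap B\subseteq\overline{A}\cap B$), so that $A$ and $B$ genuinely partition $X\setminus F$ and consequently $X\setminus(F\cup A)=B$.

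For closedness I would show $F\cup A=F\cup\overline{A}$. The nontrivial inclusion reduces to $\overline{A}\subseteq F\cup A$, which is immediate from $\overline{A}\cap B=\emptyset$: it gives $\overline{A}\subseteq X\setminus B=F\cup A$. Since $F$ is closed and $\overline{A}$ is closed, $F\cup\overline{A}$ is closed, and hence so is $F\cup A$. (I note in passing that metrisability plays no role in this argument.)

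For connectedness I would argue by contradiction. Suppose $F\cup A=P\cup Q$ is a separation into two nonempty sets relatively closed in $F\cup A$; since $F\cup A$ is closed in $X$, the sets $P$ and $Q$ are then closed in $X$ as well. As $F$ is connected it lies wholly in one piece, say $F\subseteq P$, and then $Q$ is disjoint from $F$, so $Q\subseteq A$. The idea is now to glue the leftover open set $B=X\setminus(F\cup A)$ onto the $P$-side and exhibit $\{\,Q,\;P\cup B\,\}$ as a separation of $X$, contradicting connectedness of $X$.

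The crux, and the step I expect to be the main obstacle, is verifying that this really is a separation of $X$, i.e. that $\overline{Q}\cap(P\cup B)=\emptyset$ and $Q\cap\overline{P\cup B}=\emptyset$. Since $Q$ is closed and disjoint both from $P$ (as it is a separation) and from $B$ (because $Q\subseteq A$ and $A\cap B=\emptyset$), the first identity is immediate. For the second, $\overline{P\cup B}=P\cup\overline{B}$, and here the remaining hypothesis $\overline{B}\cap A=\emptyset$ does the essential work: $Q\subseteq A$ forces $Q\cap\overline{B}=\emptyset$, while $Q\cap P=\emptyset$. Both pieces are nonempty ($Q$ by the choice of separation, and $P\cup B\supseteq P\neq\emptyset$), so $X$ would be disconnected, the desired contradiction. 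The degenerate case $F=\emptyset$ needs no special treatment, since in any separation both $P$ and $Q$ are automatically nonempty and the inclusion $Q\subseteq A$ then holds vacuously.
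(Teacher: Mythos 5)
Your argument is correct. The paper itself offers no proof at all for this lemma, merely a citation to Kuratowski (Topology II, 46.II.4), so your self-contained elementary proof is a genuine addition rather than a restatement. Each step checks out: the identity $F\cup A=F\cup\overline{A}$ (from $\overline{A}\cap B=\emptyset$ and $X\setminus B=F\cup A$) gives closedness; and in the connectedness part, having established closedness first, you correctly get that $P$ and $Q$ are closed in $X$, that $F\subseteq P$ forces $Q\subseteq A$, and that $\{Q,\,P\cup B\}$ is a pair of nonempty separated sets covering $X$, using $\overline{A}\cap B=\emptyset$ for $Q\cap B=\emptyset$ and $\overline{B}\cap A=\emptyset$ for $Q\cap\overline{B}=\emptyset$. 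Your observation that metrisability is never used is also accurate — the argument works in an arbitrary topological space, whereas Kuratowski's framework (and hence the paper's hypothesis) is metric. The only cosmetic quibble is the final remark: when $F=\emptyset$ the inclusion $Q\subseteq A$ holds not ``vacuously'' but trivially, because then $F\cup A=A$; this does not affect the validity of the proof.
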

\begin{proof}
\cite[46.II.4]{Ku2}
\end{proof}

\begin{lemma}\label{addcomponents}
If $X$ is a locally connected metrisable space, $F$ is a locally connected and closed subset and $S$ is a sum of some connected components of $X\setminus F$, then $S\cup F$ is locally connected.
\end{lemma}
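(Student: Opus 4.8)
The plan is to prove local connectedness of $S\cup F$ using the standard criterion that a space $Z$ is locally connected if and only if every component of every relatively open subset of $Z$ is open in $Z$. First I would record the basic structural facts. Since $X$ is locally connected and $F$ is closed, $X\setminus F$ is open, and in a locally connected space the components of an open set are open; hence each component $K$ of $X\setminus F$ is open in $X$, and so $S$, being a union of some of these, is open in $X$. Moreover the frontier of any such $K$ satisfies $\overline{K}\setminus K\subseteq F$: a point of $\overline{K}\setminus K$ lying in $X\setminus F$ would sit in some component $K'$, which is open and either equals $K$ (forcing the point into $K$) or is disjoint from $K$ (preventing it from being a limit point of $K$). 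From $\overline{K}\setminus K\subseteq F$ one gets $\overline{S}\subseteq S\cup F$, so $S\cup F$ is closed; one could at this stage try to finish via Fact \ref{clo_lc} by writing $S\cup F=\overline{S}\cup F$ and proving $\overline{S}$ locally connected, but the boundary difficulty is the same, so I would argue local connectedness of $S\cup F$ directly.

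Next I would fix a relatively open $O\subseteq S\cup F$, a component $M$ of $O$, and a point $x\in M$, and produce a connected neighbourhood of $x$ inside $M$. Writing $O=\Omega\cap(S\cup F)$ with $\Omega$ open in $X$, the case $x\in S$ is immediate: $x$ lies in a component $K\subseteq S$, the set $K\cap\Omega$ is open in $X$, and the component of $x$ in it is open in $X$, connected, and contained in $S\cap\Omega\subseteq O$, hence contained in $M$. The substance of the lemma is the case $x\in F$.

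For $x\in F$ I would choose, using local connectedness of $X$, a connected open $V\ni x$ with $V\subseteq\Omega$, and let $P$ be the component of $x$ in $V\cap F$; since $F$ is locally connected and $V\cap F$ is open in $F$, the set $P$ is open in $F$ and connected. I then set $N=P\cup\bigcup\{G\}$, where $G$ ranges over all components of $K\cap V$, for $K\subseteq S$ a component of $X\setminus F$, satisfying $\overline{G}\cap P\neq\emptyset$. Connectedness of $N$ is clear: $P$ is connected, each included $G$ is connected with $\overline{G}\cap P\neq\emptyset$ so $P\cup G$ is connected, and all share $P$; since $N$ is connected, contains $x$, and lies in $(V\cap F)\cup(V\cap S)\subseteq O$, we get $N\subseteq M$.

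The main obstacle, and the heart of the proof, is showing that $N$ is a neighbourhood of $x$ in $S\cup F$; the delicate point is that a piece of $S$ accumulating at $x$ might a priori attach to a component of $V\cap F$ other than $P$. To control this I would pick a \emph{connected} open $\Theta$ with $x\in\Theta\subseteq V$ and $\Theta\cap F\subseteq P$ (possible because $P$ is open in $F$ and $X$ has a basis of connected open sets), and claim $\Theta\cap(S\cup F)\subseteq N$. Indeed $\Theta\cap F\subseteq P\subseteq N$, while for $y\in\Theta\cap S$, lying in a component $K\subseteq S$, I let $G'$ be the component of $y$ in the open set $K\cap\Theta$. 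Being a component of an open set in a locally connected space, $G'$ is open in $X$ and relatively closed in $K\cap\Theta$, so any frontier point of $G'$ inside $\Theta$ lies in $\Theta\setminus K$ and in $\overline{K}\setminus K\subseteq F$, hence in $F\cap\Theta\subseteq P$; and since $\Theta$ is connected while $G'$ is a nonempty proper (as $x\in\Theta\cap F$) open subset, $G'$ cannot be closed in $\Theta$, so such a frontier point exists. Thus the component $G$ of $y$ in $K\cap V$, which contains $G'$, meets $P$ in its closure and is therefore one of the sets comprising $N$, giving $y\in N$. This establishes that $N$ is open in $S\cup F$, whence $M$ is open and $S\cup F$ is locally connected.
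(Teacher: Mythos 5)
Your proof is correct, but it cannot be ``the same approach as the paper'' because the paper does not argue this lemma at all: its proof is a one-line citation to Kuratowski's \emph{Topology} Volume II, 49.II.11. So the honest comparison is between a reference and a self-contained argument. Your argument is sound at every step I checked: the frontier inclusion $\overline{K}\setminus K\subseteq F$ for components $K$ of $X\setminus F$; the reduction via the component criterion; the construction of the connected set $N=P\cup\bigcup G$ (the gluing works because $\overline{G}\cap P\neq\emptyset$ suffices for $G\cup P$ to be connected); and, crucially, the existence of the connected open $\Theta\subseteq V$ with $\Theta\cap F\subseteq P$, together with the connectedness-of-$\Theta$ argument showing that every component $G'$ of $K\cap\Theta$ must have a frontier point inside $\Theta$, which is then forced into $F\cap\Theta\subseteq P$ --- this is exactly the delicate point (a sliver of $S$ accumulating at $x$ but attaching to a far-away part of $F$) and you handle it correctly. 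Two remarks on what each route buys. The citation keeps the paper short and leans on a classical source, but Kuratowski's framework there is (separable) metric, matching the lemma's metrisability hypothesis. Your proof never uses metrisability --- only local connectedness of $X$, closedness and local connectedness of $F$ --- so it actually establishes the lemma for arbitrary topological spaces $X$, a strictly more general statement; it also makes the paper self-contained on this point. If you wanted to shorten it, note that your observations that $S$ is open and $\overline{S}\subseteq S\cup F$ already show $S\cup F$ is closed in $X$, but as you correctly sensed, Fact \ref{clo_lc} does not then finish the job (one would still need $\overline{S}$ locally connected, which is the same difficulty), so the direct argument is the right call.
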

\begin{proof}
\cite[49.II.11]{Ku2}
\end{proof}

\begin{theorem}\label{finitec}
If $X$ is a connected and locally connected, locally compact, second-countable metrisable space, 
$E$ is a continuum in $X$ and $U$ is an arbitrary open neighbourhood of $E$, 
then there exists a locally connected continuum $F$ such that $E\subset F \subset U$ and $X\setminus F$ 
has finitely many connected components. 
\end{theorem}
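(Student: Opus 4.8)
The plan is to build the desired continuum $F$ in two stages: first enlarge $E$ to a locally connected continuum contained in $U$ (using the standard fact that in a connected, locally connected, locally compact metric space every continuum has a locally connected continuum neighbourhood inside any prescribed open set), and then deal with the finiteness of the complementary components by absorbing all but finitely many of them into $F$. Since $X$ is locally compact, second countable and metrisable, I may fix a metric and exploit total boundedness on relatively compact pieces; in particular, I would choose $F_0$ a locally connected continuum with $E \subset F_0 \subset \clo{F_0} \subset U$ and $\clo{F_0}$ compact (shrinking $U$ if necessary so that some open set with compact closure sits between $E$ and $U$, which is possible because $E$ is a continuum hence compact and $X$ is locally compact).

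Next I would analyse the complementary components of $F_0$. Because $X$ is locally connected, $X \setminus F_0$ is open and its connected components are open; because $X$ is second countable, there are at most countably many of them, say $\{C_i\}_{i \in I}$. The key geometric observation is that only finitely many of these components can meet the boundary of $U$ or fail to be relatively small: since $\clo{F_0}$ is compact and $F_0 \subset U$, there is a positive distance $\delta$ from $\clo{F_0}$ to $X \setminus U$, and I would argue that all but finitely many components $C_i$ have diameter less than $\delta$ and are entirely contained in $U$. This uses that the components small in diameter cannot accumulate except at points of $F_0$ (local connectedness controls the components near each point of the compact set $F_0$), so finitely many of them suffice to cover the ``large'' or ``far-reaching'' behaviour. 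Let $S$ be the union of all the small components that lie inside $U$; then I set $F = F_0 \cup S$.

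To conclude, I would verify the three required properties of $F$. That $E \subset F \subset U$ is immediate from $E \subset F_0$ and from having only adjoined components contained in $U$. Local connectedness of $F$ follows from Lemma \ref{addcomponents}, since $F_0$ is a locally connected closed subset of $X$ and $S$ is a union of connected components of $X \setminus F_0$; I must check that $F = F_0 \cup S$ is closed, which holds because the adjoined components have diameters tending to $0$ and accumulate only on $F_0$ (this is where the local connectedness and second countability pay off). Connectedness of $F$ comes from repeatedly applying Lemma \ref{continuum_factory}: each small component $C_i$ together with $F_0$ forms a connected closed set provided the separation hypothesis $\clo{A}\cap B = \clo{B}\cap A = \emptyset$ is satisfied, and then one passes to the union and takes closures. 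Finally, $X \setminus F$ consists of exactly the finitely many components we did not absorb, giving the finiteness conclusion.

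The hard part will be the second paragraph: proving that only finitely many complementary components are ``large'' and that the union $S$ of the discarded-or-absorbed small components behaves well topologically. Concretely, the main obstacle is showing that the small components accumulate only on $F_0$ so that $F_0 \cup S$ is closed and that its complement has finitely many components; this is a genuine feature of the plane-like structure encoded in the hypotheses (connected, locally connected, locally compact, second countable metric space) rather than a formal consequence, and it is exactly where I expect to need a careful compactness-and-covering argument together with the local connectedness of $F_0$.
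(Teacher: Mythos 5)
Your overall architecture (stage 1: fatten $E$ to a locally connected continuum $F_0$; stage 2: absorb complementary components of $F_0$) matches the paper's, but your stage 2 contains a genuine error: the dichotomy you choose is by \emph{diameter}, and the key claim you defer to the end --- that all but finitely many components of $X\setminus F_0$ have diameter less than $\delta$ --- is false in the stated generality. Finiteness of ``large'' complementary components of a Peano continuum is a specifically planar (or spherical) theorem; the hypotheses here admit $X=\R^3$, where it fails. Concretely, let $F_0$ be the closed unit ball in $\R^3$ with countably many pairwise disjoint open cylindrical cavities removed, each cavity of length about $1$ but of radius $r_n\to 0$, the cavities accumulating on a diameter segment. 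This $F_0$ is a locally connected continuum (near the accumulation segment, a small ball minus finitely or countably many disjoint thin tubes is still connected at every scale), yet $X\setminus F_0$ has infinitely many components --- the cavities --- each of diameter about $1$. Nothing in your stage 1 excludes such an $F_0$, so your set $S$ would omit infinitely many components, $X\setminus F$ would have infinitely many components, and the construction fails. For the same reason your closedness argument for $F=F_0\cup S$ (``adjoined components have diameters tending to $0$'') is not available: the components one must absorb need not be small.

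The correct dichotomy, which is what the paper uses, is not small versus large but \emph{contained in $V$} versus \emph{meeting $\partial V$}, where $V$ is an open set with $F_0\subset V\subset\overline{V}\subset U$ and $\overline{V}$ compact (your ``WLOG shrinking'' already provides this). Any component of $X\setminus F_0$ not contained in $V$ must meet $\partial V$: its boundary lies in $F_0\subset V$, while it contains points outside $V$. Moreover these components cover $\partial V$, since $\partial V\cap F_0=\emptyset$. As they are pairwise disjoint open sets, each meeting the compact set $\partial V$ and jointly covering it, a finite subcover forces this family to be finite. One then absorbs \emph{all} components contained in $V$, large or small: $F\subset\overline{V}$ is compact; it is closed and connected by a single application of Lemma \ref{continuum_factory} (the separation hypothesis there is automatic, since the two unions of components are disjoint open sets, so neither meets the closure of the other); and it is locally connected by Lemma \ref{addcomponents}. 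So the finiteness step is an elementary compactness-and-disjointness argument once the dichotomy is chosen correctly --- it is not the deep ``plane-like'' fact you anticipated having to prove. The part that genuinely requires work is your stage 1, which you cite as a standard fact but the paper must construct: a Cantor-set surjection onto $E$ (Fact \ref{cantorimg}), local arcwise connectedness of $X$ via Mazurkiewicz--Moore, the Kuratowski extension theorem (Fact \ref{ext}), and Fact \ref{lcimg} to get the Peano continuum $F_0$.
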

\begin{proof}
Since $X$ is locally compact, Hausdorff and locally connected, there exists an open neighbourhood $V_x$ of the point $x$ such that $Clo(V_x)$ is a continuum and $Clo(V_x)\subset U$ for each $x\in E$. 
Since $E$ is compact, there exist $x_1, x_2, \dots, x_n\in E$ such that $E \subset
\bigcup\limits_{i=1}^n V_{x_i}$. Let $V = \bigcup\limits_{i=1}^n V_{x_i}$. Notice that $Clo(V)$ is compact and $E \subset V \subset Clo(V)\subset U$. 
By Fact \ref{cantorimg}, there is a continuous function $f:C\to E$, where $C$ is the Cantor set and $f(C) = E$. Since $X$ is metrisable and locally compact, it is completely metrisable and therefore, by the Mazurkiewicz-Moore theorem, $X$ is locally arcwise-connected, and thus $V$ is locally arcwise-connected.
$V$ is connected by construction. Therefore by Fact \ref{ext} there is a function $f^*$ that is a continuous extension of $f$ such that $f^*:[0,1]\to V$.
Let $F_0 = f^*([0,1])$. By Fact \ref{lcimg}, $F_0$ is a locally connected continuum and $E\subset F_0\subset V$.
Let $\mathcal{S}_V$ be a family of all the connected components of $X\setminus F_0$ that are subsets of $V$. 
Let $\mathcal{S}_\infty$ be a family of all the other connected components of $X\setminus F_0$. Notice that, due to the connectedness of $S$, (1) $S\cap \partial V\not=\emptyset$ for any $S\in \mathcal{S}_\infty$. 
Since $X$ is locally connected, all the connected components of $X\setminus F_0$ are open in $X$. Since $\partial V\subset \bigcup\mathcal{S}_\infty$, by virtue of (1) and the compactness of $\partial V$, the family $\mathcal{S}_\infty$ is finite. 
Let $F = F_0 \cup \bigcup\mathcal{S}_V$. By Lemma \ref{continuum_factory}, $F$ is connected and closed. Since $F\subset Clo(V)$, $F$ is a continuum. By Lemma \ref{addcomponents}, $F$ is locally connected. Obviously, $E\subset F\subset V\subset U$. Since $X\setminus F =  \bigcup\mathcal{S}_\infty$ and $\mathcal{S}_\infty$ is finite, the proof is complete.
\end{proof}

\begin{lemma}\label{algebra}
If $X$ is a Hausdorff space, $Y$ is a topological space, $f:X\to Y$ is a function with a closed graph, $D=D(f)$, $E$ is a compact subset of the graph and $U$ is a relatively open subset of the graph such that $U\subset E \subset f$, 
then $U\cap f|D = U\cap f|\partial \pi(E)$. Moreover, if $(x,f(x))\in U \cap f|D$ and $X\setminus \pi(E)\ni x_n \to x$, then $f(x_n)\to \emptyset$.
\end{lemma}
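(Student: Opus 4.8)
The plan is to exploit the fact that, because $f$ is a function and $E\subset f$, the compact set $E$ is exactly the graph $f|\pi(E)$ of the restriction of $f$ to $\pi(E)$; consequently every $(x,f(x))\in U$ has $x\in\pi(E)$. Since $\pi$ is continuous and $X$ is Hausdorff, $\pi(E)$ is a compact, hence closed, subset of $X$, so each of its points lies either in $\mathrm{int}\,\pi(E)$ or in $\partial\pi(E)$, these being disjoint. The first thing I would record is that $\pi|E:E\to\pi(E)$ is a continuous bijection from a compact space onto a Hausdorff space, hence a homeomorphism; composing its inverse with the projection onto $Y$ shows that $f|\pi(E):\pi(E)\to Y$ is \emph{continuous} in the subspace topology on $\pi(E)$.

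With this in hand, the whole equality reduces to the single equivalence: for $(x,f(x))\in U$ one has $x\in C(f)$ if and only if $x\in\mathrm{int}\,\pi(E)$. For the implication from interior to continuity, if $x\in\mathrm{int}\,\pi(E)$ then $f$ agrees with the continuous map $f|\pi(E)$ on a full $X$-neighbourhood of $x$, so $f$ is continuous at $x$. For the converse I would use the relative openness of $U$: write $U=f\cap W$ with $W$ open and pick a basic box $G\times V\subset W$ around $(x,f(x))$; continuity of $f$ at $x$ yields a neighbourhood $G'\subset G$ of $x$ with $f(G')\subset V$, whence $(x',f(x'))\in f\cap W=U\subset E$ and thus $x'\in\pi(E)$ for every $x'\in G'$, i.e. $x\in\mathrm{int}\,\pi(E)$. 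Passing to complements and using $x\in\pi(E)$, this gives $x\in D(f)\iff x\in\partial\pi(E)$ for points of $U$, which is precisely $U\cap f|D=U\cap f|\partial\pi(E)$.

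For the \emph{moreover} clause I would argue by contradiction. Given $(x,f(x))\in U\cap f|D$ and a net $x_n\to x$ with $x_n\in X\setminus\pi(E)$, suppose $f(x_n)$ had a convergent subnet $f(x_{n_j})\to y$. Then $(x_{n_j},f(x_{n_j}))\to(x,y)$, and closedness of the graph forces $y=f(x)$. Since $U=f\cap W$ is relatively open and $(x,f(x))\in W$, the points $(x_{n_j},f(x_{n_j}))$ eventually lie in $f\cap W=U\subset E$, so eventually $x_{n_j}\in\pi(E)$, contradicting $x_{n_j}\in X\setminus\pi(E)$. Hence $f(x_n)\to\emptyset$.

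The neighbourhood manipulations are routine; the one genuinely load-bearing idea is the homeomorphism $\pi|E$, which simultaneously makes interior points of $\pi(E)$ into continuity points and, together with the relative openness of $U$, prevents continuity points from sitting on $\partial\pi(E)$. The subtlety I would watch most carefully is the distinction between continuity of $f$ as a map on $X$ and continuity of the restriction $f|\pi(E)$ on the subspace $\pi(E)$, since the two coincide only at interior points of $\pi(E)$, and it is exactly this gap on the boundary that produces the discontinuities.
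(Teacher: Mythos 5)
Your proof is correct and follows essentially the same route as the paper's: continuity of $f|\pi(E)$ from compactness of $E$, the two inclusions via the interior/boundary dichotomy for the closed set $\pi(E)$ together with relative openness of $U$, and the closed-graph-plus-openness-of-$U$ contradiction for the ``moreover'' part. The only difference is cosmetic: where the paper cites its Theorem \ref{fcon} (closed graph into a compact space) to get continuity of $f|\pi(E)$, you reprove that fact inline via the observation that $\pi|E$ is a continuous bijection from a compact space onto a Hausdorff space, hence a homeomorphism.
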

\begin{proof}
Since $E$ is compact, by Theorem \ref{fcon}, $f|\pi(E)$ is continuous. If $x\in Int(E)$, then $f$ is continuous in $x$, so $x\not\in D$. Therefore $U\cap f|D \subset U\cap f|\partial \pi(E)$ is obvious. We will show inverse inclusion by contradiction. 
Assume that $(x, f(x))\in U \cap f|\partial \pi(E)$ and $x$ is a continuity point of $f$. Since $\pi(E)$ is compact and $X$ is Hausdorff, $\pi(E)$ is closed, so $x\in \pi(E)$. Notice that $(x,f(x))\in U$, so by the continuity of $f$ at point $x$, there is an open neighbourhood $V$ of $x$, such that $f|V \subset U$. But $U\subset E$, so $x\in V \subset \pi(E)$. 
This contradicts how $x$ was chosen.
Now we will show the “moreover part”. Take any $(x,f(x))\in U \cap f|D$ and $X\setminus \pi(E)\ni x_n \to x$. Since $U\subset E$, 
$(x_n, f(x_n))\not\in U$. So no subnet of $f(x_n)$ is convergent to $f(x)$. But the graph of $f$ is closed, so $f(x_n)\to\emptyset$.
\end{proof}

\begin{theorem}\label{cluster}
If $X$ is a connected and locally connected, locally compact, second-countable metrisable space, $Y$ is a locally compact, second-countable metrisable space, $f:X\to Y$, $D=D(f)$ and the graph of $f$ is closed, connected and locally connected, then for each $x\in D$ there is an open in the graph topology $U$ and a locally connected continuum $E$ such that 
\begin{enumerate}
\item $(x,f(x))\in U\subset E\subset f$,
\item $U\cap f|D = U\cap f|\partial \pi(E)$ (so $x\in \partial \pi(E)$),
\item if $X\setminus \pi(E)\ni x_n \to x$, then $f(x_n)\to \emptyset$,
\item $X\setminus \pi(E)$ has finitely many connected components. 
\end{enumerate}
\end{theorem}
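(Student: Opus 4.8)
The plan is to apply Theorem \ref{finitec} and Lemma \ref{algebra} not to $X$ but to the graph itself, regarded as a topological space in its own right. First I would check that the graph $G := f$, equipped with the subspace topology inherited from $X\times Y$, satisfies the hypotheses of Theorem \ref{finitec}. The product $X\times Y$ is locally compact, second-countable and metrisable, being a product of two such spaces, and since $G$ is closed in $X\times Y$ it inherits all three properties; connectedness and local connectedness of $G$ are given outright by hypothesis. Hence $G$ is a connected, locally connected, locally compact, second-countable metrisable space, so Theorem \ref{finitec} is available inside $G$.

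The second observation, which is the real engine of the proof, is that because $f$ is a \emph{function}, the projection restricts to a continuous bijection $\pi|_G\colon G\to X$ (its set-theoretic inverse being $x\mapsto (x,f(x))$). Consequently, for every $A\subset G$ one has the identity $X\setminus\pi(A)=\pi(G\setminus A)$, and $\pi$ carries connected sets to connected sets. This is what will let me push a ``finitely many components'' statement about $G\setminus E$ forward to $X\setminus\pi(E)$.

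Now I would fix $x\in D$ and carry out the construction. Since $G$ is locally compact, Hausdorff and locally connected, exactly as in the proof of Theorem \ref{finitec} the point $(x,f(x))$ admits a relatively open connected neighbourhood $U$ in $G$ whose closure $\overline{U}$ (in $G$) is a continuum. Applying Theorem \ref{finitec} inside $G$, taking the input continuum to be $\overline{U}$ and the open neighbourhood to be $G$ itself, produces a locally connected continuum $E$ with $\overline{U}\subset E\subset G=f$ such that $G\setminus E$ has finitely many connected components. In particular $(x,f(x))\in U\subset E\subset f$ with $U$ relatively open, which is clause (1); and $E$ is a compact subset of the graph, so Lemma \ref{algebra} applies and yields clause (2) together with its moreover part, namely clause (3). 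Since $(x,f(x))\in U\cap f|D$, clause (2) forces $x\in\partial\pi(E)$, as required.

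It then remains to deduce (4). Writing $S_1,\dots,S_m$ for the finitely many components of $G\setminus E$, the identity of the second paragraph gives $X\setminus\pi(E)=\pi(G\setminus E)=\pi(S_1)\cup\dots\cup\pi(S_m)$, a union of $m$ connected sets; and a union of finitely many connected sets has at most that many connected components. I expect the only genuinely delicate point to be precisely this transfer of the finiteness conclusion from $G\setminus E$ to $X\setminus\pi(E)$ — everything hinges there on the bijectivity of $\pi|_G$, i.e.\ on $f$ being single-valued. The remaining work is bookkeeping: verifying that the graph inherits the hypotheses of Theorem \ref{finitec}, and invoking the already-established Lemma \ref{algebra} for clauses (2) and (3).
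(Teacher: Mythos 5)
Your proposal is correct and follows essentially the same route as the paper's own proof: both apply Theorem \ref{finitec} to the graph itself as a connected, locally connected, locally compact, second-countable metrisable space, then invoke Lemma \ref{algebra} for clauses (2)--(3), and transfer the finiteness of components through the projection using the identity $\pi(f\setminus E)=X\setminus\pi(E)$. Your write-up is in fact slightly more explicit than the paper about why this last identity holds (injectivity of $\pi$ on the graph) and why a finite union of connected sets has finitely many components.
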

\begin{proof}
Notice that $f$ is a connected and locally connected, locally compact, second-countable metrisable subspace of $X\times Y$.
Take an arbitrary $x\in D$. Choose open in the graph topology set $U$, such that $(x,f(x))\in U$ and $Clo_f(U)$ is a continuum.
By Theorem \ref{finitec}, there exists a locally connected continuum $E\subset f$ such that $f\setminus E$ has finitely many connected components and $U \subset E$.
By Lemma \ref{algebra}, $f|D \cap U = f|\partial p(E) \cap U$ and for any subsequence $X\setminus \pi(E)\ni x_n \to x$ we have $f(x_n)\to \emptyset$. By Fact \ref{lcimg}, $p(E)$ is a locally connected continuum and since $f\setminus E$ has finitely many connected components and $\pi$ is continuous, the set $\pi(f\setminus E) = \pi(f)\setminus \pi(E) = X\setminus \pi(E)$ also has finitely many connected components.
\end{proof}

Theorem 13 has an interesting consequence for $X=\R^2$, namely $\partial \pi(E)$ from the above theorem is locally connected, which implies (by Theorem \ref{lcimg}) that $f|D$ has a locally connected graph. To prove this, let me refer to the following theorem:

\begin{theorem}\label{lcborder}
If $A$ is a locally connected continuum in $\R^2$, and $S$ is a connected component of $\R^2 \setminus A$, then $\partial S$ is a locally connected continuum.
\end{theorem}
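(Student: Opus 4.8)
The plan is to pass to the sphere, dispose of compactness and connectedness by soft arguments, and then confront local connectedness, which is the real difficulty. First I would work in $S^2=\R^2\cup\{\infty\}$. The set $A$ remains a locally connected continuum there, and the component $S$ of $\R^2\setminus A$ corresponds to a component $R$ of $S^2\setminus A$ (namely $S$ itself when $S$ is bounded, or $S\cup\{\infty\}$ otherwise); since in the unbounded case $\infty$ lies in the interior of $R$, we have $\partial R=\partial S$. Because components of an open set in a locally connected space are open, $S$ is open and $\partial S\subset A$, so $\partial S$ is closed in the compact set $A$ and hence compact. One also sees that $S^2\setminus R$ is connected, so $R$ is a simply connected domain, a fact I would keep for the local part. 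Thus it suffices to prove Torhorst's theorem: the boundary of a complementary domain $R$ of a locally connected continuum $A\subset S^2$ is a locally connected continuum.

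For connectedness I would invoke the unicoherence of $S^2$. The set $\overline{R}$ is a continuum, being the closure of the connected set $R$. The set $S^2\setminus R$ is also a continuum: it equals $A\cup\bigcup_{R'\neq R}R'$, where $R'$ runs over the remaining components of $S^2\setminus A$; each satisfies $\emptyset\neq\partial R'\subset A$, so $A\cup R'=A\cup\overline{R'}$ is connected, and the union of these sets, all containing $A$, is connected and closed. Now $S^2=\overline{R}\cup(S^2\setminus R)$ exhibits $S^2$ as a union of two continua whose intersection is exactly $\overline{R}\setminus R=\partial R$; unicoherence of $S^2$ (Kuratowski, Vol.\ 2) then gives that $\partial R$ is connected, hence a continuum.

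The main obstacle is local connectedness, and I would argue by contradiction through the failure of connectedness im kleinen. If $\partial R$ were not locally connected at some $x$, there would be $\epsilon>0$ and distinct components $C_n$ of $\partial R\cap\overline{B(x,\epsilon)}$, each meeting $B(x,1/n)$ and reaching $\partial B(x,\epsilon)$, so that along a subsequence $C_n$ converges in the Hausdorff metric to a subcontinuum $N\ni x$ of $\partial R$ with $\operatorname{diam}N\geq\epsilon$ and $C_n\cap N=\emptyset$; thus $N$ is a continuum of convergence. The subtlety — and this is exactly where the plane enters, rather than merely the local connectedness of $A$ — is that $A$ may route through its own interior, so a naive appeal to uniform local connectedness of $A$ does not contradict the separation of the $C_n$ from $N$. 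The correct route uses that $N$ and all the $C_n$ lie on the boundary of the \emph{single} simply connected domain $R$: the domain must insert crosscuts between $N$ and the approaching $C_n$, and Janiszewski's theorem combined with the uniform local connectedness of $A$ then forces either a separation of $A$ incompatible with its connectedness, or infinitely many complementary domains of $A$ of diameter bounded below clustering at $x$, either of which contradicts the local connectedness of $A$. This is the technical heart of the matter; as a cross-check, simple connectivity of $R$ makes the statement equivalent, via Carathéodory's theorem, to the continuous extension of a Riemann map to $\overline{\mathbb{D}}$, although that reformulation does not by itself remove the topological work. Alternatively one may simply cite Torhorst's theorem (Kuratowski, Vol.\ 2, or Whyburn's \emph{Analytic Topology}). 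Once local connectedness is in hand, $\partial S=\partial R$ is a compact, connected, locally connected set, i.e.\ a locally connected continuum, which completes the proof.
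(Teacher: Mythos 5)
Your proposal is correct and takes essentially the same route as the paper: pass to the one-point compactification $S^2$ (noting $\partial S=\partial R$) and invoke Torhorst's theorem, which is precisely the result \cite[61.II.4]{Ku2} that the paper's one-line proof cites. Your supplementary arguments (compactness, connectedness via unicoherence of $S^2$) are sound, and although your crosscut/Janiszewski sketch of local connectedness is not by itself a complete proof, your explicit fallback to citing Kuratowski/Whyburn closes that step exactly as the paper does.
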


\begin{proof}
Since $\R^2$ is homeomorphic with a unit sphere without one point, it's enough to apply \cite[61.II.4]{Ku2}. 
\end{proof}

Let's formulate a simple consequence of the above.

\begin{theorem}
If $A$ is a locally connected continuum in $\R^2$ and $\R^2\setminus A$ has finitely many connected components, then $\partial A$ is locally connected.
\end{theorem}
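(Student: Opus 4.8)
The plan is to reduce the statement to Theorem \ref{lcborder} and Fact \ref{clo_lc}. Write $\R^2\setminus A = S_1\cup\dots\cup S_n$, where $S_1,\dots,S_n$ are the finitely many connected components guaranteed by the hypothesis. Since $A$ is closed and $\R^2$ is locally connected, each $S_i$ is open. I first observe that $\partial S_i\subset A$ for every $i$: a point $p\in\partial S_i$ lies in $Clo(S_i)\setminus S_i$, so if it were in $\R^2\setminus A$ it would lie in some open component $S_j$, and then $S_j$ would meet $S_i$ (as $p$ is a limit point of $S_i$), forcing $j=i$ and contradicting $p\notin S_i$. Hence $\partial S_i$ is disjoint from $\R^2\setminus A$, i.e.\ $\partial S_i\subset A$. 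By Theorem \ref{lcborder}, each $\partial S_i$ is a locally connected continuum, in particular a closed and locally connected subset of $\R^2$.

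The key step is the set identity $\partial A = \bigcup_{i=1}^n \partial S_i$. I would compute $\partial A = A\cap Clo(\R^2\setminus A)$, using that $A$ is closed. Because there are only finitely many components, closure distributes over the union, so $Clo(\R^2\setminus A) = \bigcup_{i=1}^n Clo(S_i)$. Intersecting with $A$ gives $\partial A = \bigcup_{i=1}^n \bigl(A\cap Clo(S_i)\bigr)$, and since $A\cap S_i=\emptyset$ while $\partial S_i\subset A$, each term collapses: $A\cap Clo(S_i) = A\cap(S_i\cup\partial S_i) = \partial S_i$. This yields the claimed identity. This is the one place where finiteness is genuinely used—without it the closure of the union could strictly exceed the union of the closures, and the identity (hence the whole argument) would break down.

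Finally, $\partial A$ is a finite union of closed, locally connected sets, so applying Fact \ref{clo_lc} inductively completes the proof: at each stage the partial union $\partial S_1\cup\dots\cup \partial S_k$ is closed (a finite union of compact sets) and locally connected (by the previous stage), and $\partial S_{k+1}$ is closed and locally connected, so their union is again locally connected. After $n-1$ steps we conclude that $\partial A$ is locally connected. I expect the only real subtlety to be the boundary identity of the second paragraph; the remaining steps are a direct assembly of the cited facts.
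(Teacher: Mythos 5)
Your proof is correct and follows essentially the same route as the paper: decompose $\R^2\setminus A$ into its finitely many open components, establish $\partial A=\bigcup_{i=1}^n\partial S_i$, apply Theorem \ref{lcborder} to each $\partial S_i$, and finish with Fact \ref{clo_lc}. The only difference is that you justify the boundary identity in detail (via $\partial S_i\subset A$ and $A\cap Clo(S_i)=\partial S_i$), whereas the paper asserts it in one line from the finiteness of the union; your extra care is warranted, since that identity is the one step that is not a purely formal manipulation.
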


\begin{proof}\label{lcborder_full}
Let $S_1, S_2, \dots S_n$ be connected components of $\R^2 \setminus A$.
$S_1, S_2, \dots S_n$ are open, since $\R^2\setminus E$ is an open subset of a locally connected space and thus locally connected.
Since we're dealing only with a finite number of open sets, the below equation holds.
$$
\partial A = \partial (\R^2 \setminus A) = \partial (\bigcup_{i=1}^n S_i) = \bigcup_{i=1}^n \partial S_i.
$$
By Theorem \ref{lcborder}, $\partial S_i$ is locally connected for $i=1,2, \dots, n$.
Therefore, by Fact \ref{clo_lc}, $\partial A$ is locally connected.
\end{proof}

By applying Theorem \ref{cluster}, Theorem \ref{lcborder_full} and  Theorem \ref{lcimg}, we immediately get the following corollary.

\begin{corollary}\label{main}
If $Y$ is a locally compact, second-countable metrisable space, $f:\R^2\to Y$ has a closed, connected and locally connected graph, then $f|D_f$ has a locally connected graph.
\end{corollary}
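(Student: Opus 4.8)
The plan is to assemble the final corollary purely as a bookkeeping exercise that glues together the three preceding results, so I would spend most of the effort verifying that their hypotheses are met rather than proving anything new. First I would fix the domain $X=\R^2$, which is a connected, locally connected, locally compact, second-countable metrisable space, and observe that the ambient space $Y$ satisfies the standing assumptions of Theorem \ref{cluster}. Since $f:\R^2\to Y$ is assumed to have a closed, connected and locally connected graph, every hypothesis of Theorem \ref{cluster} is in place, so for each $x\in D_f$ I obtain the promised relatively open set $U$ and locally connected continuum $E\subset f$ with the four listed properties.

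Next I would pass from the graph $f$ to the base $\R^2$ via the projection $\pi$. The key observation is that Theorem \ref{cluster} already records that $\pi(E)$ is a locally connected continuum in $\R^2$ and that $X\setminus\pi(E)=\R^2\setminus\pi(E)$ has finitely many connected components. These are exactly the two hypotheses of Theorem \ref{lcborder_full}, so I would apply it to the continuum $A:=\pi(E)$ and conclude that $\partial\pi(E)$ is locally connected. This is the structural heart of the argument, and it is where the specific topology of the plane enters through Theorem \ref{lcborder}; everything before it was true for general $X$, but the local connectedness of the frontier of a planar continuum with finitely many complementary components is what makes $X=\R^2$ essential.

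Then I would transfer this local connectedness back up to the graph over discontinuity points. By property (2) of Theorem \ref{cluster}, within the neighbourhood $U$ the set $f|D_f$ coincides with $f|\partial\pi(E)$, namely $U\cap f|D_f=U\cap f|\partial\pi(E)$. Since $\partial\pi(E)$ is a locally connected (and, being a closed subset of the continuum $\pi(E)$, compact) space, and the restriction of $f$ to it is continuous — because $\partial\pi(E)\subset\pi(E)$ and $f|\pi(E)$ is continuous by Theorem \ref{fcon} — the graph $f|\partial\pi(E)$ is the continuous image of a compact locally connected space. By Theorem \ref{lcimg} this image is a locally connected continuum, and hence $f|D_f$ is locally connected at the point $(x,f(x))$, since local connectedness is a local property and $U$ witnesses it on a neighbourhood.

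The only step requiring genuine care, and the one I would expect to be the main obstacle, is the gluing across different choices of $x$: each $x\in D_f$ yields its own $U$ and $E$, and I must ensure that the local-connectedness witnesses assemble into local connectedness of the whole graph $f|D_f$. Here local connectedness is a pointwise-local notion, so it suffices that every point $(x,f(x))\in f|D_f$ has \emph{some} relatively open neighbourhood $U$ on which $f|D_f$ is locally connected; property (1) guarantees $(x,f(x))\in U$ and the preceding paragraph guarantees local connectedness on $U\cap f|D_f$. I would therefore conclude by noting that local connectedness at each point, uniformly over all discontinuity points, gives local connectedness of $f|D_f$, completing the corollary.
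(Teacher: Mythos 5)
Your proposal is correct and is essentially the paper's own argument, which the author compresses into the single remark that the corollary follows immediately from Theorem \ref{cluster}, Theorem \ref{lcborder_full} (applied to $A=\pi(E)$) and Fact \ref{lcimg}. You have merely expanded the same chain --- the continuity of $f|\pi(E)$ via Theorem \ref{fcon}, the local identification $U\cap f|D_f=U\cap f|\partial\pi(E)$, and the pointwise-local nature of local connectedness --- into explicit detail, with no genuine divergence.
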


One might propose that as the local connectedness of $f|D_f$ is a local property, it might be enough to assume only local connectedness of $f$. Unfortunately, there is a simple example that shows that the connectedness of $f$ is necessary in Corollary \ref{main} and Theorem \ref{cluster}.

\begin{example}
Let $r_n = \frac{1}{4n(n+1)}$, $B_n = \{(x,y)\in \R^2: \sqrt{x^2 + (y-\frac{1}{n})^2} < r_n\}$.
$f(x,y) = 
\begin{cases}
0 \text{ for } y \geq 0 \text{ and } (x,y)\not\in \bigcup_{n=1}^\infty B_n,\\
\frac{1}{y} \text{ for } y < 0,\\
n + \tan(\frac{\pi}{2r_n}\sqrt{x^2 + (y-\frac{1}{n})^2}\;) \text{ for } (x,y)\in B_n \text{ for } n=1, \dots 
\end{cases}$ 
\end{example}

Note that in the above example $B_n$ is a sequence of pairwise disjoint open discs convergent to the point $(0,0)$. $f=0$ on the whole half plane $\R \times [0, \infty)$ except discs $B_n$. $f\geq n$ on $B_n$ and converges to infinity on $\partial B_n$. Therefore, it is easy to notice that the graph of $f$ is closed and not connected. The local connectedness of the graph is obvious everywhere except the point $(0,0,0)$. But as $f\geq n$ on $B_n$ and $f=0$ on $\R \times [0, \infty) \setminus \bigcup_{n=1}^\infty B_n$, it's enough to see that $\R \times [0, \infty) \setminus \bigcup_{n=1}^\infty B_n$ is locally connected at the point $(0,0)$. Thus the graph of $f$ is locally connected. However, $f|D_f = (\R\times \{0\} \cup \bigcup_{n=1}^\infty \partial B_n)\times \{0\}$ and is not locally connected in $(0,0,0)$. It's also easy to notice that for any open in the graph topology set $U$ such that $(0,0,0)\in U$ and for any locally connected continuum $E$ such that $U\subset E\subset f$, $\R^2 \setminus \pi(E)$ has infinitely many connected components, since $B_n\cap \pi(E) = \emptyset$ and $\partial B_n\subset E$ for almost all $n$. 

\end{document}